\newtheorem{teo}{Theorem}[section]
\newtheorem{prop}[teo]{Proposition}
\newtheorem{lem}[teo]{Lemma}
\newtheorem{conj}[teo]{Conjecture}
\newtheorem{rem}[teo]{Remark}
\newcommand{\CC}{{\mathbb C}}
\newcommand{\RR}{{\mathbb R}}
\newcommand{\ZZ}{{\mathbb Z}}
\newcommand{\QQ}{{\mathbb Q}}
\newcommand{\PP}{{\mathbb P}}
\newcommand{\lto}{\longrightarrow}
\newcommand{\cF}{\mathcal{F}}
\newcommand{\cS}{\mathcal{S}}
\newcommand{\cO}{\mathcal{O}}
\newcommand{\cZ}{{\mathcal Z}}
\newcommand{\cX}{{\mathcal X}}
\newcommand{\cY}{{\mathcal Y}}
\newcommand{\ol}{\overline}
\newcommand{\wt}{\widetilde}
\title{o-minimal flows on abelian varieties.}
\author{Emmanuel Ullmo, Andrei Yafaev}
\address{Ullmo: IHES 35 Route de Chartres, 91440 Bures-sur-Yvette, France}
\email{ullmo@ihes.fr}
 \address{Yafaev: UCL, Department of Mathematics, Gower street, WC1E 6BT, London, UK}
\email{yafaev@math.ucl.ac.uk}
\begin{document}
\maketitle

\begin{abstract}
Let $A$ be an abelian variety over $\CC$ of dimension $n$ and 
$\pi\colon \CC^n \lto A$ be the complex uniformisation. 
Let $X$ be an unbounded subset of $\CC^n$ definable in a suitable 
o-minimal structure. We give a description of the Zariski closure of $\pi(X)$.
\end{abstract}

\section{Introduction.}

Let $A$ be a complex abelian variety of dimension $n$. 
Write $A = \CC^n /\Lambda$ where $\Lambda \subset \CC^n$ is a lattice and let 
$\pi \colon \CC^n \lto A$ be the uniformisation map.

A subvariety $V$ of $A$ is called \emph{weakly special} if 
$V = P + B$ where $P$ is a point of $A$ and $B$ is an abelian subvariety.
The abelian Ax-Lindemann-Weierstrass theorem is the following.

\begin{teo} \label{AxLin}
Let $Y$ be a complex algebraic subset of $\CC^n$.
The components of the Zariski closure of $\pi(Y)$ are 
weakly special subvarieties.
\end{teo}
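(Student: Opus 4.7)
\begin{prf}{Proof plan}
My approach would follow the Pila--Zannier strategy: combine the Pila--Wilkie counting theorem with the rigidity of complex algebraic sets. After decomposing $Y$ and taking one irreducible component, let $V$ be an irreducible component of the Zariski closure of $\pi(Y)$ of dimension $d$; if $d=n$ then $V=A$ and the conclusion is trivial, so assume $d<n$. Enlarging $Y$ inside $\pi^{-1}(V)$, we may further assume $Y$ is a \emph{maximal} irreducible complex-algebraic subset of $\pi^{-1}(V)$. The goal is then to prove that $Y$ is an affine translate of a complex subspace $W\subset\CC^n$ whose image $\pi(W)$ is an abelian subvariety of $A$, from which $V=\pi(Y)$ is weakly special.

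Fix a semi-algebraic fundamental domain $F\subset\CC^n$ for $\Lambda$. Since $\pi(Y)$ is Zariski dense in $V$ and $Y$ is complex algebraic, a volume estimate gives
$$\#\{\gamma\in\Lambda:(\gamma+F)\cap Y\neq\emptyset,\ \|\gamma\|\leq T\}\ \geq\ c\,T^{2(n-d)}$$
for some $c>0$ and all large $T$. On the other hand, the set $\cZ:=\{g\in\RR^{2n}:(g+F)\cap Y\neq\emptyset\}$ is definable in $\RR_{\mathrm{an},\exp}$, since $Y$ is algebraic and the condition is a bounded one on the compact region $g+F$. The polynomial lower bound on integer points of $\cZ$ forces, by Pila--Wilkie, a positive-dimensional connected real semi-algebraic arc $t\mapsto\gamma(t)\subset\cZ$ containing a lattice point; by analytic continuation from the points of $F$ where $\gamma(t)+Y$ meets $\pi^{-1}(V)$, each translate $\gamma(t)+Y$ is entirely contained in $\pi^{-1}(V)$.

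To conclude, one exploits the maximality of $Y$. The complex Zariski closure of $\bigcup_t(\gamma(t)+Y)$ is a complex-algebraic subset of $\pi^{-1}(V)$ containing $Y$, so by maximality it equals $Y$ itself; thus every $\gamma(t)-\gamma(0)$ lies in the stabilizer $\mathrm{Stab}(Y)\subset\CC^n$. This real-analytic stabilizer is therefore positive-dimensional, and since $Y$ is complex algebraic one verifies that $\mathrm{Stab}(Y)$ is $i$-stable, hence a complex subspace $W$. The Pila--Wilkie block structure further produces many \emph{lattice} elements in $W$, so $W\cap\Lambda$ is cocompact in $W$; consequently $\pi(W)$ is an abelian subvariety of $A$, $Y$ is a translate of $W$, and $V$ is weakly special. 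The main obstacle, in my view, is this last step: passing from a real-analytic family of lattice translations preserving $Y$ to a genuine complex subgroup, which is where both the maximality of $Y$ and the complex algebraicity are used in an essential way.
\end{prf}
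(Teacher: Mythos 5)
First, a framing remark: the paper does not prove Theorem \ref{AxLin} at all --- it is quoted from Ax, with the Pila--Zannier/Orr counting proof cited as the argument ``close in spirit'' to the paper (and imitated in the proof of Theorem \ref{t1}). Your sketch aims at exactly that counting proof, but it has a genuine gap at its central step: the definable set you feed into Pila--Wilkie is the wrong one. You take $\cZ=\{g\in\RR^{2n}:(g+F)\cap Y\neq\emptyset\}$, which is just $Y-F$; since $F$ is open this is an open, full-dimensional subset of $\RR^{2n}$, so $\cZ^{alg}=\cZ$, $\cZ^{tr}=\emptyset$, and Pila--Wilkie yields nothing: the semialgebraic arcs it produces in $\cZ$ bear no relation to $V$. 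The defining condition must couple translates of $Y$ with $\pi^{-1}(V)$ inside the fundamental domain, as in the paper's $\Sigma=\{x:\dim((X+x)\cap\wt V)=\dim X\}$ with $\wt V=\cF\cap\pi^{-1}(V)$: it is this dimension condition that is (a) automatically satisfied by the lattice translates you count, because $\pi^{-1}(V)$ is $\Lambda$-invariant and contains $Y$, and (b) strong enough to run analytic continuation at non-lattice parameters. Your continuation step as written --- each $\gamma(t)+Y$ meets $\pi^{-1}(V)$, hence is contained in it --- is false: meeting an analytic set in a point, or in a locus of dimension smaller than $\dim Y$, propagates nothing; you need an intersection of full dimension $\dim Y$, which is precisely what your $\cZ$ fails to record.

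Two further steps would need repair even after fixing the definable set. The lower bound $c\,T^{2(n-d)}$ with $d=\dim V$ is not justified (nothing forces $\dim Y\geq n-d$); what one actually proves, and all that is needed against Pila--Wilkie's $O(T^{\epsilon})$, is a bound such as $\gg T$ (the paper's connectedness/path argument in Proposition \ref{counting}) or $\gg T^{2}$ using an algebraic curve inside $Y$. In the endgame, the Zariski closure of $\bigcup_t(\gamma(t)+Y)$ need not remain inside the analytic set $\pi^{-1}(V)$, so maximality cannot be applied to it directly; one needs the lemma quoted in the paper (Orr, Lemma 8.1) interposing an algebraic set between a semialgebraic subset and the analytic set. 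Finally, the claim that the ``Pila--Wilkie block structure'' produces enough lattice elements in $\mathrm{Stab}(Y)$ to make $W\cap\Lambda$ cocompact is unsubstantiated: the lattice points you counted are translates meeting $Y$, not stabilizer elements. The standard way to conclude is to pass to the abelian subvariety generated by $\pi(W)$, quotient, and induct on $\dim A$ (or argue as in Proposition \ref{Zardensity} via the stabilizer of the Zariski closure). So the architecture is the right one, but as written the proof does not go through.
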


This theorem is due to Ax (see \cite{Ax1} and \cite{Ax2}) and plays an important 
role in the new proof by Pila and Zannier of the Manin-Mumford conjecture \cite{PZ}.
Note that the paper \cite{PZ} provides a different proof of the abelian Ax-Lindemann-Weierstrass
theorem. 
For a proof close in spirit to the contents of this paper, see Section 9 of \cite{Orr}.
In reality, in this statement, $Y$ can be taken to be only \emph{semialgebraic} ($\CC^n$ being identified with $\RR^{2n}$).

The aim of this paper is to investigate the Zariski closure of the sets $\pi(X)$
where $X$ is definable in an o-minimal structure which is a much wider class of objects.
We refer to the book \cite{VDD} for the notion of a set definable in an o-minimal structure,
in particular the structures $\RR_{an}$ and $\RR_{an,exp}$ (this last structure is actually defined and studied in \cite{MillerVDD}). Just recall that $\RR_{an}$ is 
the o-minimal structure generated by the restricted analytic functions and $\RR_{an.exp}$ is 
additionally generated by the graph of the real exponential.
For a subset $\Sigma$ of $A$, we denote by $Zar(\Sigma)$ its Zariski closure.

To be able to prove anything, we will need to make certain additional assumptions. Firstly, the set $X$ will be assumed to be \emph{unbounded}.
The necessity of this condition can be demonstrated by the following example.
Let $\cF$ be a connected bounded fundamental domain for the action of $\Lambda$ on $\CC^n$.
The restriction of $\pi$ to $\cF$ is definable in $\RR_{an}$.
Let $V$ be any algebraic subvariety of $A$ and let $\wt{V} = \pi^{-1}(V)\cap \cF$. Then
$\wt{V}$ is definable in $\RR_{an}$ and $Zar(\pi(\wt{V})) = V$.

However, when $X$ is an unbounded real analytic manifold, we prove the following.

\begin{teo} \label{t1}
Let $X$ be an unbounded real analytic manifold of $\CC^n = \RR^{2n}$ definable in an o-minimal structure which is an extension of $\RR_{an}$.

Let $V = Zar(\pi(X))$.  For any point $P$ of $\pi(X)$ there is a positive dimensional abelian subvariety $B_P$ of $A$ such that $P + B_P$ is contained in $V$.

In particular, $V$ contains a Zariski dense set of positive dimensional weakly special subvarieties.
\end{teo}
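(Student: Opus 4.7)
The strategy is, for each $P = \pi(x_0) \in \pi(X)$, to produce a positive-dimensional closed real subtorus $T \subset A$ with $P + T \subset V$, and then to take the Zariski closure of $T$ to obtain the desired abelian subvariety $B_P$. Since $X$ is unbounded, definable, and real analytic, the o-minimal curve selection lemma at infinity supplies a definable real analytic curve $\gamma \colon [0, \infty) \to X$ with $\gamma(0) = x_0$ and $\|\gamma(t)\| \to \infty$. By the monotonicity theorem together with compactness of the unit sphere $S^{2n-1}$, after reparametrization we may assume $\gamma(t)/\|\gamma(t)\|$ converges to a unit vector $v \in \CC^n = \RR^{2n}$.

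The main step is to extract a real affine line in $\pi^{-1}(V)$ in the direction $v$. Since $A$ is compact, there is a sequence $t_k \to \infty$ with $\pi(\gamma(t_k)) \to Q \in V$; choose $\lambda_k \in \Lambda$ with $\gamma(t_k) - \lambda_k \to q$, where $\pi(q) = Q$. The translated curves $\gamma - \lambda_k$ all lie in the closed $\Lambda$-invariant set $\pi^{-1}(V)$ and together form a definable family. Using the tameness of definable families in an o-minimal extension of $\RR_{an}$, the Hausdorff limit of $(X - \lambda_k) \cap K$ on any compact $K \subset \CC^n$ is a definable closed subset of $\pi^{-1}(V) \cap K$. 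Combining this limit procedure with the asymptotic direction $v$ and iterating over scales produces the full real affine line $q + \RR v \subset \pi^{-1}(V)$.

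The image $\pi(q + \RR v) = Q + \pi(\RR v) \subset V$ has analytic closure $Q + T$, where $T$ is the closure of the connected one-parameter subgroup $\pi(\RR v)$: a closed connected real Lie subgroup of $A$ of positive dimension. Since $V$ is closed, $Q + T \subset V$; the Zariski closure $B := Zar(T)$ is then a connected closed algebraic subgroup of $A$, hence a positive-dimensional abelian subvariety, and $Q + B \subset V$. To pass from $Q$ to the prescribed $P$, one either arranges $t_k$ so that $\pi(\gamma(t_k)) \to P$ (when $P$ is itself an accumulation point of $\pi(\gamma)$), or exploits that the asymptotic-direction construction depends only on the tail of $\gamma$ and can be transported along $\pi(\gamma)$, yielding $P + B_P \subset V$ with $B_P := B$. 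The final assertion then follows immediately: $\pi(X)$ is Zariski dense in $V$ by construction and each of its points lies in the positive-dimensional weakly special $P + B_P \subset V$.

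The main obstacle is producing a full real line in $\pi^{-1}(V)$ rather than merely a tangent vector or a germ of a curve. The asymptotic direction $v$ controls only the angle at which $\gamma$ escapes to infinity, not its position, so upgrading this to a translation-invariance of the Hausdorff limit requires essential use of o-minimality in an extension of $\RR_{an}$, together with the $\Lambda$-invariance and closedness of $\pi^{-1}(V)$. A secondary subtlety is the passage from an arbitrary accumulation coset $Q + B$ to one through the prescribed point $P$, which relies on the tail-invariance of the asymptotic direction construction.
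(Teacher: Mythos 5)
Your proposal has two genuine gaps, and they sit exactly at the points carrying all the weight. First, the central step --- upgrading the asymptotic direction $v$ to a full real affine line $q + \RR v \subset \pi^{-1}(V)$ --- is only asserted (``combining this limit procedure \dots and iterating over scales''), not proved. This is where the actual work would be: you must show that, along the chosen sequence, the whole portion of the translated curve inside each ball $B(q,R)$ Hausdorff-converges to the segment of length $2R$ through $q$ in direction $v$ (in both directions along the curve), and then run a diagonal extraction over $R$; none of this is supplied, and the $\Lambda$-invariance and closedness of $\pi^{-1}(V)$ that you invoke only guarantee that whatever the limit is lies in $\pi^{-1}(V)$, not that it contains a line. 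Second, and more fundamentally, even granting the line, your construction produces a coset $Q + B$ only through a point $Q$ that is an accumulation point at infinity of $\pi(\gamma(t_k))$, whereas the theorem demands $P + B_P \subset V$ for \emph{every} $P \in \pi(X)$. Your two proposed fixes do not work: an arbitrary $P=\pi(x_0)$ has no reason to be an accumulation point of the image of a curve escaping to infinity, and ``transporting along $\pi(\gamma)$'' would require the translation-invariance $B \subset {\rm Stab}(V)$, which nothing in the limit argument gives. As a consequence the final Zariski-density assertion also collapses: the set of limit points $Q$ you can reach need not be Zariski dense in $V = Zar(\pi(X))$, since $V$ is determined by all of $\pi(X)$, including the part the tail of the curve never sees. (Your Hausdorff-limit mechanism is in fact the natural tool for describing the \emph{topological} closure of $\pi(X)$, in the spirit of the earlier work \cite{UY} and the conjecture in the introduction, rather than the Zariski closure through every point.)

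The paper's proof avoids both problems by a different mechanism: it intersects $X$ with translates of a bounded fundamental domain $\cF$, observes that unboundedness forces at least $T/2$ lattice points of height $\le T$ into the definable set $\Sigma = \{x : \dim((X+x)\cap \wt V) = \dim X\}$, and applies the Pila--Wilkie theorem to produce a positive dimensional semialgebraic $W \subset \Sigma$. The dimension condition plus real-analytic continuation then yield the containment $X + W \subset \pi^{-1}(V)$ \emph{simultaneously at every point of $X$}; Orr's Lemma 8.1 converts $x+W$ into a positive dimensional complex algebraic $Y_x \subset \pi^{-1}(V)$ through each $x$, and the abelian Ax--Lindemann--Weierstrass theorem turns $Zar(\pi(Y_x))$ into weakly special subvarieties, giving $P + B_P \subset V$ for every $P = \pi(x)$. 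If you want to salvage your approach, you would need a substitute for the step that makes the conclusion uniform over all of $X$; as written, the proposal proves (at best) a statement about limit points at infinity, which is strictly weaker than Theorem \ref{t1}.
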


To investigate general definable sets $X$, we
will also impose some mild restrictions on the o-minimal structure.
Let $\cS$ be an o-minimal structure over $\RR$, containing $\RR_{an}$ and whose definable sets
admit an analytic stratification (as defined in \cite{VDD}, Chapter 3).
This condition holds for most `usual' o-minimal structures, for example
$\RR_{an}$ and $\RR_{an,exp}$ (see \cite{MillerVDD}).
We fix such a structure $\cS$ and 
in what follows and by definable, we will mean `definable in $\cS$'.

Next we introduce the notion of \emph{essential Zariski closure}.
Let $X$ be an unbounded definable set as before. 
For $R>0$, let $B(0,R)$ be the open unit ball of centre $0$ and radius $R$.
The variation of the sets $\pi(X \cap B(0,R))$ when $R$ varies is what we call an \emph{o-minimal flow}.
We show that for $R$ large enough, the Zariski closure of the set $\pi(X \backslash (X \cap B(0,R)))$ is constant. 
We call this the \emph{essential} Zariski closure of $\pi(X)$ and denote it by
$Zaress(\pi(X))$.

For an abelian subvariety $B$ of $A$, write $V_B \subset \CC^n$ for the tangent space to $B$ at the origin and $p_B$ for the projection 
$\CC^n \lto V_B$.

We prove the following:

\begin{teo} \label{main_thm1}
Let $X$ be an unbounded definable subset of $\CC^n$. Let $V$ be 
$Zaress(\pi(X))$.

For each point $P$, in $\pi(X)$, there exists a positive dimensional abelian subvariety 
$B_P$ of $A$ such that $P + B_P$ is contained in $V$.

In particular, $V$ 
  contains a Zariski dense set of positive dimensional
weakly special subvarieties.
\end{teo}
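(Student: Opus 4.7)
The plan is to reduce Theorem~\ref{main_thm1} to Theorem~\ref{t1} by exploiting the analytic stratification of $X$ provided by the hypothesis on $\cS$, combined with an analytic continuation argument which guarantees that removing a bounded part of a connected real analytic manifold does not change the Zariski closure of its image in $A$.

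First, I would decompose $X$ using the analytic stratification: write $X = \bigsqcup_{i=1}^N X_i$ where each $X_i$ is a connected definable real analytic submanifold of $\CC^n$ (refining to connected components if necessary). Let $I \subset \{1,\dots,N\}$ be the indices of the \emph{unbounded} strata. For $R_0$ large enough every bounded stratum $X_j$, $j \notin I$, is contained in $B(0,R_0)$, so for all $R \geq R_0$
\[
X \setminus (X \cap B(0,R)) \;=\; \bigsqcup_{i \in I} (X_i \setminus B(0,R)).
\]

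The key analytic continuation step comes next. For each $i \in I$ and any $R > 0$, the set $X_i \setminus B(0,R)$ is a nonempty open subset of the connected real analytic manifold $X_i$. For any Zariski closed $W \subset A$ the preimage $\pi^{-1}(W)$ is a closed analytic subset of $\CC^n$, hence $X_i \cap \pi^{-1}(W)$ is a closed analytic subset of $X_i$; if it contains the nonempty open set $X_i \setminus B(0,R)$, by the identity principle for real analytic functions on a connected manifold, it must equal $X_i$. Applied to $W = Zar(\pi(X_i \setminus B(0,R)))$ this gives $Zar(\pi(X_i)) = Zar(\pi(X_i \setminus B(0,R)))$. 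Combined with the first step, taking $R$ large enough that the essential Zariski closure has stabilized,
\[
V \;=\; Zaress(\pi(X)) \;=\; Zar\Bigl(\bigcup_{i \in I} \pi(X_i \setminus B(0,R))\Bigr) \;=\; \bigcup_{i \in I} Zar(\pi(X_i)),
\]
where the last equality uses that the Zariski closure of a finite union is the union of Zariski closures.

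Each $X_i$ for $i \in I$ is an unbounded real analytic manifold definable in the extension $\cS$ of $\RR_{an}$, so Theorem~\ref{t1} applies directly: for every $P \in \pi(X_i)$ there is a positive-dimensional abelian subvariety $B_P$ of $A$ with $P + B_P \subset Zar(\pi(X_i)) \subset V$. Since $\bigsqcup_{i \in I} \pi(X_i) \subset \pi(X) \cap V$ is Zariski dense in $V$ (containing $\pi(X \setminus B(0,R))$ for $R$ large), this yields the "in particular" conclusion: $V$ is covered by a Zariski dense family of positive-dimensional weakly special subvarieties. The main obstacle is the analytic continuation step, which requires the strata to be \emph{connected} real analytic manifolds so that vanishing on any nonempty open subset propagates to the whole stratum; a secondary subtlety is that the pointwise statement "for each $P \in \pi(X)$" must be read on the Zariski dense subset of $V$ coming from the unbounded strata (points in bounded strata need not lie in $V$ at all), and it is this dense family of weakly special subvarieties that drives the final statement.
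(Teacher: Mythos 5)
Your proposal is correct and follows essentially the same route as the paper: decompose $X$ into (finitely many) unbounded definable real analytic cells/strata, use analytic continuation to show that removing a bounded part does not change the Zariski closure of the image of an unbounded stratum, conclude that $Zaress(\pi(X))$ is the union of the $Zar(\pi(X_i))$ over the unbounded strata, and then apply Theorem~\ref{t1} to each of them. The only nitpick is that $X_i \setminus B(0,R)$ is closed rather than open in $X_i$; but it contains the nonempty open subset $X_i \setminus \ol{B(0,R)}$ (nonempty since $X_i$ is unbounded), so your identity-principle step goes through unchanged.
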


We prove a characterisation of subvarieties of an abelian variety containing a Zariski dense set of weakly special subvarieties (see proposition \ref{Zardensity}). 
Let $V$ be such a subvariety. Our proposition \ref{Zardensity} shows that there exist abelian subvarieties $B$ and $B'$ of $A$
such that $A= B + B'$ and $B \cap B'$ is finite, 
$V = B + V'$ where $V'$ is a subvariety of $B'$.

We deduce the following.

\begin{teo}\label{main_thm}
Assume that $X$ is a definable subset of $\CC^n$ such that for all abelian subvarieties $B$ of $A$, 
$p_B(X)$ is unbounded.
Then components of 
$Zaress(\pi(X))$ are weakly special. 
\end{teo}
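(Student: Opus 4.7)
I would argue by induction on $n = \dim A$, treating one irreducible component $V_0$ of $V := Zaress(\pi(X))$ at a time. Since $V_0$ is a component of the Zariski closure of $S := \pi(X \setminus (X \cap B(0,R)))$ for $R$ large, the set $V_0 \cap S$ is Zariski-dense in $V_0$. Theorem \ref{main_thm1} applied to $X$ in $A$ yields, at each such $P \in V_0 \cap S$, a positive-dimensional abelian subvariety $B_P \subset A$ with $P + B_P \subset V$; connectedness forces $P + B_P \subset V_0$. Hence $V_0$ contains a Zariski-dense family of positive-dimensional weakly special subvarieties, and Proposition \ref{Zardensity} furnishes abelian subvarieties $B, B' \subset A$ with $A = B + B'$, $B \cap B'$ finite, $\dim B > 0$, and $V_0 = B + V_0'$ for some $V_0' \subset B'$. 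If $B = A$, then $V_0 = A$ is weakly special and we are done; otherwise, pass to the quotient $\rho \colon A \to A'' := A/B$ with linear lift $\bar\rho \colon \CC^n \to \CC^{n''}$, where $n'' := \dim A''$, and set $X'' := \bar\rho(X) \subset \CC^{n''}$.

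The plan is to apply the inductive hypothesis to $X''$ in $A''$. For this, the hypothesis of the theorem must be verified for $X''$: for every abelian subvariety $\tilde B \subset A''$, $p_{\tilde B}(X'')$ must be unbounded. Each such $\tilde B$ corresponds to an abelian subvariety $C \subset A$ with $B \subset C$ and $C/B = \tilde B$. Choosing a complement $C_0 \subset B'$ of $B$ in $C$ (so that $C = B + C_0$ with $B \cap C_0$ finite), the canonical isomorphism $V_{C_0} \cong V_C / V_B = V_{\tilde B}$ identifies $p_{\tilde B}(X'')$ with the image of $p_{C_0}(X)$, which is unbounded by the hypothesis on $X$. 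Thus by induction, the components of $Zaress(\pi''(X''))$ are weakly special in $A''$, where $\pi'' \colon \CC^{n''} \to A''$ is the uniformisation.

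The main obstacle is establishing $\rho(V_0) \subseteq Zaress(\pi''(X''))$, so that $\rho(V_0)$ inherits weak-specialness. The companion inclusion $Zaress(\pi''(X'')) \subseteq \rho(V)$ follows easily from $\|\bar\rho(x)\| \leq \|x\|$, since then the essential part of $X''$ lies in $\bar\rho(X \setminus B(0,R))$ and $\pi'' = \rho \circ \pi$ on lifts. For the forward direction, a point $x \in X$ with $\|x\|$ large but $\|\bar\rho(x)\|$ small concentrates near $V_B$, and a priori its image $\rho(\pi(x))$ could lie in $\rho(V) \setminus Zaress(\pi''(X''))$; to rule this out one uses the hypothesis applied to $B'$ and its subvarieties, namely that unboundedness of $p_{B'}(X)$ (and of $p_{B_0}(X)$ for every abelian $B_0 \subset B'$) forces a Zariski-dense subset of $V_0 \cap S$ to come from $x$'s of large $V_{B'}$-component, whose $\bar\rho$-images have large norm in $\CC^{n''}$ and therefore contribute to the essential part of $X''$. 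Granting this, $\rho(V_0)$ is weakly special in $A''$, and restricting $\rho$ to the isogeny $B' \to A''$ shows $V_0' = Q + C_0$ is weakly special in $B'$ for some $Q \in B'$ and abelian subvariety $C_0 \subset B'$; hence $V_0 = B + V_0' = Q + (B + C_0)$ is weakly special in $A$, closing the induction.
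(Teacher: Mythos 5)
Your overall strategy---quotient by $B=\mathrm{Stab}(V_0)^0$, check the hypothesis downstairs, and induct on dimension---is the same as the paper's, whose proof at this point consists of the single phrase ``reiterating the argument with $B'$ and $V'$''. You have correctly located the delicate point of that iteration, but your proposal does not actually cross it: the step you label the main obstacle, namely $\rho(V_0)\subseteq Zaress(\pi''(X''))$, is only ``granted'', and the mechanism you offer for it is false. You claim that the unboundedness of $p_{B'}(X)$ (and of $p_{B_0}(X)$ for $B_0\subseteq B'$) forces a Zariski-dense subset of $V_0\cap S$ to come from points of $X$ with large $V_{B'}$-component. But the hypothesis of the theorem is a condition on the whole definable set $X$, and it can be satisfied by pieces of $X$ that have nothing to do with the component $V_0$. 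For instance, let $A=E_1\times E_2\times E_3$ with pairwise non-isogenous elliptic curves, and
$X=\bigl([1,\infty)u\times\{(a,b)\}\bigr)\cup\bigl(\{c\}\times[1,\infty)v\times\{d\}\bigr)\cup\bigl(\{(e,f)\}\times[1,\infty)w\bigr)$,
with $u,v,w$ directions whose real lines have dense image in the respective curves. Every $p_B(X)$ is unbounded, and $V_0=E_1\times\{(\bar a,\bar b)\}$ is a component of $Zaress(\pi(X))$ with $B=E_1\times 0\times 0$. All points of $V_0\cap S$ come from the first piece of $X$, whose $V_{B'}$-component is the bounded constant $(a,b)$; and for generic $a,b$ one has $\rho(V_0)=\{(\bar a,\bar b)\}$, which is not contained in $Zaress(\pi''(X''))=(E_2\times\{\bar d\})\cup(\{\bar f\}\times E_3)$. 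So the inclusion you need can genuinely fail, and with it the transfer ``$\rho(V_0)$ inherits weak-specialness from the inductive hypothesis applied to $X''$''.

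In this particular example the final conclusion is unaffected because $\rho(V_0)$ happens to be a point, but that is an accident of the example, not a consequence of your argument; as written, everything after ``Granting this'' rests on a claim that is not true, so the induction is not closed. Any correct completion has to treat each component of $Zaress(\pi(X))$ together with the specific unbounded cells of $X$ that generate it, rather than transferring the global hypothesis on $X$ to the quotient flow $X''=\bar\rho(X)$; note that the paper's own one-line ``reiteration'' is silent on exactly this point, so you have identified a real subtlety, but you have not resolved it.
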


The strategy of the proof of the theorem \ref{t1} relies on the theory of o-minimality and Pila-Wilkie counting theorem.
Let $X$ be as in the statement and $V$ be the Zariski closure of $\pi(X)$. Using a suitable definable set and applying Pila-Wilkie theorem, we show that there exists a positive dimensional semi-algebraic 
set $W \subset \CC^n = \RR^{2n}$ such that $X + W$ is contained in $\pi^{-1}(V)$.
Applying the Ax-Lindemann-Weierstrass theorem, we then show that 
for any $P$ of $\pi(X)$, there exists a weakly special subvariety $P + B_P \subset V$.

Finally, we would like to point out one possible application of our theorem.

Recall the following theorem of Bloch-Ochiai (see Chapter 9 of \cite{Kobayashi})
which is proved using Nevanlinna theory.

\begin{teo} \label{bloch}
Let $A$ be an abelian variety and $f \colon \CC \lto A$ be a non-constant holomorphic map. Then the Zariski closure of $f(\CC)$ is a translate of an abelian subvariety.
\end{teo}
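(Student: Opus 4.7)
The plan is to derive Theorem~\ref{bloch} from Theorem~\ref{main_thm1} and Proposition~\ref{Zardensity}, by induction on $\dim A$. First, using that $\CC$ is simply connected, I lift $f$ to a non-constant entire map $\tilde f \colon \CC \lto \CC^n$, set $X := \tilde f(\CC)$, and let $V := Zar(f(\CC))$. The set $X$ is unbounded by Liouville applied coordinatewise (were all coordinates bounded they would each be constant, contradicting non-constancy of $\tilde f$), and $V$ is irreducible: any decomposition $V = V_1 \cup V_2$ into proper subvarieties would write $\CC = f^{-1}(V_1) \cup f^{-1}(V_2)$ as a union of two proper analytic, hence discrete, subsets of $\CC$, which is impossible. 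The same identity-principle argument applied to $U = \{|z| > R\}$ shows $f(U)$ is Zariski dense in $V$ for every $R > 0$, and consequently $Zaress(\pi(X)) = V$.

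Granting for now that $X$ is definable in some o-minimal extension of $\RR_{an}$ (the main obstacle, discussed below), Theorem~\ref{main_thm1} then yields, for every $P \in f(\CC)$, a positive dimensional abelian subvariety $B_P \subset A$ with $P + B_P \subset V$. Since $f(\CC)$ is Zariski dense in $V$, the translates $P + B_P$ form a Zariski dense family of positive dimensional weakly special subvarieties of $V$, and Proposition~\ref{Zardensity} then produces abelian subvarieties $B, B' \subset A$ with $A = B + B'$, $B \cap B'$ finite, $\dim B > 0$, and $V = B + V'$ for some $V' \subset B'$. In particular, $B$ is contained in the stabiliser of $V$.

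I would conclude by induction on $\dim A$. The base case $\dim A = 1$ is immediate, since any irreducible positive dimensional subvariety of an elliptic curve is the whole curve and $V$ has positive dimension because $f$ is non-constant. For the inductive step, let $q \colon A \lto A/B$ be the quotient. If $q \circ f$ is constant with value $q(P)$, then $V \subset q^{-1}(q(P)) = P + B$, and the $B$-stability of $V$ forces $V = P + B$, a translate of an abelian subvariety. Otherwise $q \circ f$ is a non-constant holomorphic map into the strictly lower-dimensional abelian variety $A/B$, and by inductive hypothesis $q(V) = Zar((q \circ f)(\CC))$ is a translate of an abelian subvariety $C \subset A/B$; using $V = q^{-1}(q(V))$ (which follows from $B$-invariance), $V$ itself is a translate of the abelian subvariety $q^{-1}(C) \subset A$.

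The main obstacle is the definability assumption on $X$ needed to invoke Theorem~\ref{main_thm1}. For an arbitrary non-constant entire map $\tilde f$, the image $\tilde f(\CC)$ is not definable in any of the standard o-minimal structures such as $\RR_{an,exp}$, because no o-minimal structure can contain all entire functions. Restrictions of $\tilde f$ to compact disks are definable in $\RR_{an}$ but bounded, so they do not feed directly into our main theorems. The delicate task is to extract from $\tilde f(\CC)$ an unbounded definable slice whose image under $\pi$ remains Zariski dense in $V$; Nevanlinna-type growth estimates would be the natural tool to produce such a slice together with a suitable ambient o-minimal structure in which it becomes definable. This additional analytic input, which the o-minimal framework does not supply on its own, is precisely why the classical proof of Theorem~\ref{bloch} proceeds via Nevanlinna theory.
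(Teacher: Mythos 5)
There is a genuine gap, and you have in fact named it yourself: the whole argument is conditional on $X=\tilde f(\CC)$ (or at least some unbounded slice of it with Zariski dense image in $V$) being definable in an o-minimal extension of $\RR_{an}$, and that hypothesis fails in general. Note first that the paper does not prove Theorem \ref{bloch} at all: it is quoted from Chapter 9 of \cite{Kobayashi}, where it is established by Nevanlinna theory, and the authors only observe that Theorem \ref{main_thm} recovers \emph{some} cases, namely those where one can exhibit by hand an unbounded definable set inside $\pi^{-1}(f(\CC))$ with Zariski dense image (as for $f(z)=(z,\dots,z,e^z,\dots,e^z)$ into a simple $A$, using the slice $\mathrm{Im}\, z\in[0,2\pi]$, which is definable in $\RR_{an,exp}$). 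The introduction even records why the ``delicate task'' you defer cannot be carried out in general: for $f(z)=(e^z,e^{iz})\subset\CC^2$, any $Y\subset\CC$ such that $f(Y)$ is definable must have both the real and the imaginary part of $z$ bounded on $Y$, so no unbounded definable slice exists at all; and by Proposition 9.10 of \cite{MillerVDD}, a map growing faster than every iterated exponential has non-definable graph in $\RR_{an,exp}$. So the ``main obstacle'' you acknowledge is not a technical refinement to be supplied later; it is the entire analytic content of the Bloch--Ochiai theorem, and it lies outside what Theorems \ref{t1}, \ref{main_thm1} and \ref{main_thm} can deliver. As written, your proposal is a conditional reduction, not a proof, and no proof along these lines is available in the paper's framework.

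Two smaller remarks on the conditional part, which is otherwise reasonable. Even in cases where an unbounded definable slice $X_0\subset\tilde f(\CC)$ does exist, your density claims (that $f(\{|z|>R\})$ is Zariski dense in $V$, hence $Zaress(\pi(X))=V$) are argued for the full image; to apply Theorem \ref{main_thm1} usefully you need the essential Zariski closure of $\pi(X_0)$ for the \emph{slice} to be all of $V$, which is an additional constraint on the extraction (in the paper's example this is checked by hand, together with simplicity of $A$). Your induction on $\dim A$ through the quotient $q\colon A\to A/B$, granting Proposition \ref{Zardensity}, is sound in structure, but each inductive step again invokes the missing definability input for the lift of $q\circ f$, so it cannot repair the gap; the decisive failure is the definability step.
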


Theorem \ref{main_thm} imples some cases of theorem \ref{bloch}.

Consider for example $A = \CC^n/\Lambda$ (where $\Lambda$ is a lattice such that $A$ is a simple abelian variety) and $f \colon \CC \lto A$
given by $f(z) = (z, \dots , z , e^z, \dots , e^z) {\rm mod} \Lambda$ with $s$ factors of $z$ and $r$ times of $e^z$ with $r+s = n$. Then consider the set 
$X \subset \CC^n$ given by 
$$
X = \{ (x+iy, \dots , x+iy, e^xe^{iy}, \dots,  e^xe^{iy}) : x \in \RR, y \in [0,2\pi] \}.
$$
Clearly $X$ is unbounded and definable in $\RR_{an,exp}$ and its image in $A$ is contained in  $f(\CC)$. By theorem \ref{main_thm}, the Zariski closure of $f(\CC)$ is $A$ (since $A$ is simple). 

It is not however always possible to ``extract'' such a definable unbounded set 
$X$ from $f(\CC)$ as the example of $(e^z, e^{iz}) \subset \CC^2$ shows.
Indeed, in this example, for any subset $Y \subset \CC$ such that
$f(Y)$ is definable, both the real and imaginary parts of $z \in Y$ must be bounded.

Another (counter)-example is the following. Define the iterated exponential function $exp_n(x)$ by $exp_1 = exp$ and
$exp_{n} = exp \circ exp_{n-1}$.
By Proposition 9.10 of \cite{MillerVDD}, a definable function in $\RR_{an,exp}$ is bounded by $exp_n(x^m)$ for some $n,m$. Therefore a graph of a function which `grows faster' than any $exp_n$ will not satisfy the assumptions of our theorems.
Note that it is a long-standing open problem whether there exists an o-minimal structure 
containing a ``super-exponential'' function.

We conclude this introduction with an open question in the spirit of \cite{UY}. It concerns the topological closure of $\pi(X)$ rather than Zariski closure. 
Recall from \cite{UY} that a \emph{real} weakly special subvariety is defined to be a translate of 
a real subtorus of $A$ (hence not necessarily algebraic).

\begin{conj}
Let $X$ be, as before, an unbounded definable real analytic manifold. We denote by $\ol{\pi(X)}$ the topological closure of $\pi(X)$.

There exists a real analytic submanifold $V$ of $A$ containing a dense subset of real weakly special
subvarieties
such that
$$
\ol{\pi(X)} = \pi(X) \cup V.
$$   
\end{conj}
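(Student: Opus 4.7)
The plan is to mirror the strategy of Theorem \ref{t1} in the topological setting, replacing complex abelian subvarieties by real subtori and Zariski closure by topological (Kronecker) closure. The heuristic is that an unbounded definable real analytic manifold $X$ admits well-defined asymptotic directions at infinity; projecting the corresponding rays to $A = \RR^{2n}/\Lambda$ produces real weakly special subvarieties by Kronecker's theorem, and these should assemble into the submanifold $V$.

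First, I would extract asymptotic directions from $X$. Applying o-minimal cell decomposition in $\cS$ to $X \cap \{\|x\| \ge R\}$ for $R$ large and using definable choice, one obtains a definable family of parametrizing curves $\gamma_s \colon [R_0, \infty) \to X$ covering the unbounded part of $X$. By the monotonicity theorem, the limit $v(s) := \lim_{t \to \infty} \gamma_s(t)/\|\gamma_s(t)\|$ exists in the unit sphere of $\RR^{2n}$, and the analytic stratification hypothesis on $\cS$ ensures $v(s)$ is real analytic in $s$ on each stratum. For each direction $v \in \RR^{2n} \setminus \{0\}$, the topological closure of $\pi(\RR_{>0}\cdot v)$ in $A$ is a real weakly special subvariety, namely a translate of the smallest real subtorus $T_v \subset A$ whose Lie algebra contains $v$.

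I would then define $V$ as the union, over parameters $s$ and appropriate basepoints, of the translates of $T_{v(s)}$ arising as Kronecker closures above. Writing $\gamma_s(t) = t\, v(s) + \rho_s(t)$ with $\rho_s(t) = o(t)$, a careful tameness estimate (using that $\pi$ is a group homomorphism and that $\rho_s$ is definable, hence has controlled growth) should show that $\pi(\gamma_s(t))$ accumulates on such a translate of $T_{v(s)}$. Real analyticity of the data $s \mapsto v(s)$ on strata, together with the analytic structure of $X$, should make $V$ a real analytic submanifold (after possibly stratifying and taking closures), and by construction $V$ contains a dense family of real weakly special subvarieties.

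The main obstacle is the reverse inclusion $\ol{\pi(X)} \subset \pi(X) \cup V$. Accumulation points at infinity may come not only from pure asymptotic rays but also from how the error terms $\rho_s(t)$ distribute modulo $\Lambda$: controlling this requires uniform o-minimal estimates that are far from automatic, and in borderline cases (for instance when $T_{v(s)}$ has positive codimension in $A$) may require a Ratner-type equidistribution input on the torus $A$ applied to a definable family rather than a single orbit. A secondary obstacle is producing a single real analytic submanifold $V$ rather than a countable union of analytic pieces of varying dimension; merging strata cleanly would need either extra equidistribution or stronger structural hypotheses on $\cS$, which is presumably why the statement appears here only as a conjecture rather than a theorem.
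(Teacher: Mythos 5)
This statement is not proved in the paper at all: it is stated as an open conjecture in the introduction (in the spirit of \cite{UY}), so there is no proof of the paper's to compare yours against, and your proposal does not close the gap either --- indeed you say as much yourself. The decisive problem is not merely the ``reverse inclusion'' you flag, but that your definition of $V$ via leading asymptotic directions is already inadequate. Writing $\gamma_s(t)=t\,v(s)+\rho_s(t)$ with $\rho_s(t)=o(t)$ gives no control of $\rho_s(t)$ modulo $\Lambda$: the error term can be unbounded (only sublinear), and then the accumulation set of $\pi(\gamma_s(t))$ is governed by \emph{all} unbounded ``slow'' coordinates of $\gamma_s$, not just by the Kronecker closure of the ray $\RR_{>0}v(s)$. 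A concrete example: take $n=1$, $\Lambda=\ZZ^2$, and $X=\{(t,\log t): t\ge 1\}$, which is definable in $\RR_{an,\exp}$. The asymptotic direction is $(1,0)$, so your $T_{v}$ is a one-dimensional subtorus, yet $\ol{\pi(X)}$ is the whole two-torus; recovering this from your construction would force you to first determine the closure of the family of ``basepoints'' $\rho_s(t)$ modulo $\Lambda$, which is exactly the original problem again. So the proposed $V$ is in general too small unless one proves a genuine equidistribution statement for definable families (a Weyl/Ratner-type input on the torus), which is the real content of the conjecture and is nowhere supplied.

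A second, subordinate gap is the regularity of $V$: even granting the correct identification of the limit sets fiberwise in $s$, you only obtain a (possibly countable, after exhausting strata and radii) union of analytic pieces of varying dimensions, and no argument is given that these assemble into a single real analytic submanifold containing a dense family of real weakly special subvarieties. By contrast, the mechanism that makes the paper's Zariski-closure results (Theorems \ref{t1}, \ref{main_thm1}, \ref{main_thm}) work --- the Pila--Wilkie count on the definable set $\Sigma$, producing a semialgebraic $W$ with $X+W\subset\pi^{-1}V$, followed by Ax--Lindemann --- has no analogue in your sketch, and it is precisely the absence of such a mechanism for topological closures that leaves the statement conjectural.
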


In section \ref{shuwu}, we prove a characterisation of subvarieties of abelian varieties containing a Zariski dense
subset of weakly special subvarieties, namely that such a subvariety is a union of weakly special ones.
We believe this result and our argument to be of independent interest.

\section*{Acknowledgements.}

The second author is grateful to Alex Wilkie and Gareth Jones 
for useful discussions at the `O-minimality and applications' conference in Konstanz in July 2015.
The second author
is grateful to the IHES for hospitality during his visit in May 2016 when this paper was written in its final form.
The second author gratefully acknowledges 
financial support of the ERC, Project 511343.

We would like to thank the referee for their valuable comments.

\section{Proof of theorem \ref{t1}.}

In this section we assume that $X$ is an unbounded real analytic submanifold of
$\CC^n = \RR^{2n}$ definable in some o-minimal structure which contains $\RR_{an}$. Let $V$ be the Zariski closure of $\pi(X)$ in $A$. 

\subsection{A definable set and point counting.}
The contents of this section are essentially a reproduction of the arguments of Orr from Section 9 of \cite{Orr} with slight adjustments. 

In this section we define 
a certain definable set associated with $X$ and, using Pila-Wilkie theorem, show that this set contains a positive dimensional semi-algebraic subset.

Choose a fundamental set $\cF$ for the action of $\Lambda$ on $\CC^n$
such that $X \cap \cF$ is non-empty. We choose $\cF$ to be an open connected subset of 
$\CC^n$ such that $\ol{\cF}$ is compact and $\Lambda$-translates of $\ol{\cF}$ cover $\CC^n$.
The set $\cF$ is an `open parallelepided'.
 Since $\cF$ is an open subset of $\CC^n$, we have that $\dim(X\cap \cF) = \dim(X)$.
Let $\wt{V}$ be $\cF \cap \pi^{-1}V$. This is a definable set since the o-minimal structure contains $\RR_{an}$ and $\pi$ restricted to $\cF$ is definable in $\RR_{an}$.

Consider the definable set
$$
\Sigma = \{ x \in \CC^n : \dim(X + x)\cap \wt{V} = \dim(X) \}. 
$$


We have the following lemma:
\begin{lem}
If $\lambda \in \Lambda$ and $X \cap (\cF - \lambda) \not= \emptyset$, then 
$\lambda \in \Sigma$.
\end{lem}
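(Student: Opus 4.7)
The plan is to unpack the definitions and observe that two ingredients do all the work: the $\Lambda$-invariance of $\pi$, which forces $X+\lambda$ to sit inside $\pi^{-1}(V)$, and the openness of $\cF$ in $\CC^n$, which guarantees that any non-empty intersection of $X+\lambda$ with $\cF$ has full dimension.

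First I would translate the hypothesis: $X\cap (\cF-\lambda)\neq \emptyset$ is equivalent to $(X+\lambda)\cap \cF\neq \emptyset$. Next, since $\pi$ factors through $\CC^n/\Lambda$, we have $\pi(X+\lambda)=\pi(X)\subset V$, hence $X+\lambda\subset \pi^{-1}(V)$. Combining these,
\[
(X+\lambda)\cap \wt{V} \;=\; (X+\lambda)\cap \cF \cap \pi^{-1}(V) \;=\; (X+\lambda)\cap \cF,
\]
which is non-empty.

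The remaining step is a dimension count. Because $\cF$ is open in $\CC^n$, the set $(X+\lambda)\cap \cF$ is a non-empty open subset of $X+\lambda$ in its induced topology. Since $X$ is a real analytic manifold (or, more generally since the local dimension of a definable set is preserved under intersecting with a non-empty open subset of the ambient space, using that $X\cap \cF$ was observed to have the same dimension as $X$), we conclude
\[
\dim\bigl((X+\lambda)\cap \wt{V}\bigr) \;=\; \dim\bigl((X+\lambda)\cap \cF\bigr) \;=\; \dim(X+\lambda) \;=\; \dim(X),
\]
which is exactly the condition $\lambda\in \Sigma$.

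There is no real obstacle here; the lemma is essentially a bookkeeping statement matching the definitions of $\Sigma$ and $\wt{V}$ against the $\Lambda$-equivariance of $\pi$. The only point that requires any care is the justification that intersecting $X+\lambda$ with the open set $\cF$ does not drop dimension, which reduces to the same observation already used for $X\cap \cF$ itself immediately before the definition of $\Sigma$.
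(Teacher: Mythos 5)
Your proof is correct and follows essentially the same route as the paper: translate the hypothesis into $(X+\lambda)\cap\cF\neq\emptyset$, use $\Lambda$-invariance of $\pi^{-1}(V)$ to get $X+\lambda\subset\pi^{-1}(V)$ and hence $(X+\lambda)\cap\wt{V}=(X+\lambda)\cap\cF$, then invoke openness of $\cF$ to see the intersection has dimension $\dim(X)$. The dimension step is justified at the same level of detail as in the paper, so nothing is missing.
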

\begin{proof}
From $\Lambda$-invariance of $\pi^{-1}V + \lambda = \pi^{-1}V$, 
we see that for $\lambda$ as in the statement (in particular for $\lambda \in \Lambda$),
$X + \lambda \subset \pi^{-1}V$.

It follows that
$$
(X + \lambda)\cap \wt{V} = (X + \lambda)\cap \cF.
$$

As $\cF - \lambda$ is an open subset of $\CC^n$, we see that
$$
\dim( X \cap (\cF - \lambda)) = \dim(X) = \dim ((X + \lambda)\cap \cF)
$$

The conclusion follows.
\end{proof}

Fix a basis $\lambda_1, \dots, \lambda_{2n}$ of $\Lambda$.
Then $\Lambda\otimes \QQ$ is identified with $\QQ^{2n}$.
We define the height of an element 
$\lambda = \sum a_i \lambda_i \in \Lambda$ ($a_i \in \ZZ$) as
$$
H(\lambda) = \max (|a_1|,\dots , |a_{2n}|).
$$
This height thus coincides with the usual height on $\QQ^n$.
 
\begin{prop} \label{counting}
There exists $T_0 \geq 0$ such that for all $T \geq T_0$,
$$
|\{ x \in \Sigma \cap \Lambda : H(x) \leq T\}| \geq T/2.
$$
\end{prop}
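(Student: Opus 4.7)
The plan is to use o-minimality to extract an unbounded ``ray'' of norms in $X$, and then to count, by a pigeonhole argument on the lattice translates of $\cF$ that $X$ meets, the elements this produces in $\Sigma\cap\Lambda$ via the preceding lemma.

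Since $X$ is unbounded and definable, the definable function $\nu(x)=\|x\|$ on $X$ has unbounded image; by o-minimality that image is a finite union of intervals and points in $\RR$, and in particular contains an entire ray $[R_0,\infty)$. For each $t\ge R_0$ pick (generically, to avoid the measure-zero set of points on boundaries of lattice translates) an $x_t\in X$ with $\|x_t\|=t$, and let $\lambda_t\in\Lambda$ be the unique element with $x_t\in\cF-\lambda_t$. By the preceding lemma $\lambda_t\in\Sigma\cap\Lambda$.

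Set $D:=\sup_{y\in\ol{\cF}}\|y\|$ and $d:=\mathrm{diam}(\ol{\cF})$. From $x_t+\lambda_t\in\ol{\cF}$ one gets $\|\lambda_t\|\le t+D$, and if $\lambda_{t_1}=\lambda_{t_2}$ then $x_{t_1}-x_{t_2}\in\ol{\cF}-\ol{\cF}$, so
\[
|t_1-t_2|=\bigl|\|x_{t_1}\|-\|x_{t_2}\|\bigr|\le \|x_{t_1}-x_{t_2}\|\le d.
\]
Thus on $[R_0,T_1]$ each fiber of $t\mapsto\lambda_t$ is contained in an interval of length $\le d$; as the fibers cover $[R_0,T_1]$, the map takes at least $(T_1-R_0)/d$ distinct values.

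Taking $\|\cdot\|$ to be the sup-norm $\|\cdot\|_*$ in the basis $\lambda_1,\dots,\lambda_{2n}$ of $\Lambda$, for which $H(\lambda)=\|\lambda\|_*$ on $\Lambda$, and using that $\cF$ is an open parallelepiped aligned with this basis so that $d\le 1$, setting $T_1=T-D$ yields at least $T-D-R_0$ distinct elements of $\Sigma\cap\Lambda$ of height $\le T$; this exceeds $T/2$ for all $T\ge T_0:=2(D+R_0)$. The only substantive input is the o-minimal finite-image property used to produce the ray $[R_0,\infty)$; the rest is pigeonhole, and the only delicate point is aligning norm and basis so that $H$ coincides with $\|\cdot\|$ on $\Lambda$, which is what makes the implicit constants match the stated $1/2$.
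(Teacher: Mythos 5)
Your o-minimal extraction of a ray in the image of the norm, the height bookkeeping $H(\lambda_t)\le t+D$, and the fibre-diameter pigeonhole are all fine, and this is a genuinely different route from the paper (which walks along a path in $X$ and uses that the heights of successive fundamental domains crossed change by at most one). But there is a genuine gap at the step ``pick (generically, to avoid the measure-zero set of points on boundaries of lattice translates) an $x_t\in X$ with $\|x_t\|=t$''. For a \emph{fixed} $t$ the slice $\{x\in X:\|x\|_*=t\}$ may be entirely contained in the union $\partial$ of the boundaries of the translates $\cF-\lambda$, in which case no admissible $x_t$ exists and the preceding lemma gives you nothing; genericity in $x$ is not available pointwise in $t$. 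This is not a removable technicality: your argument nowhere uses that $X$ is real analytic or connected, yet without some such input the proposition can actually fail. For instance, in an elliptic curve with $\Lambda=\ZZ^2$ and $\cF=(0,1)^2$, take $X$ to be the union of a small circle inside $\cF$ and the real line $\{y=0\}$; then $X$ is an unbounded definable real analytic $1$-manifold meeting $\cF$, but $(X+\lambda)\cap\cF=\emptyset$ for all nonzero $\lambda\in\Lambda$, so $\Sigma\cap\Lambda$ is finite. So the ``measure zero'' parenthesis is exactly the place where the proof has to do real work, and at present it does none.

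The gap is repairable, but only by adding the inputs the paper leans on. Assuming $X$ connected (as the paper implicitly does via path-connectedness) and real analytic, the point of $X\cap\cF$ lies off every grid hyperplane, so $X$ is contained in none of them; by analytic continuation each intersection of $X$ with a grid hyperplane is nowhere dense in $X$, and by Baire over the countably many hyperplanes $X\setminus\partial$ is dense in $X$. Hence the set of good $t$ (those admitting $x_t\in X\setminus\partial$ with $\|x_t\|_*=t$) is dense in $[R_0,\infty)$, and your pigeonhole must then be run as a covering statement: the fibres give closed intervals of length at most $1$ whose union contains a dense subset of $[R_0,T_1]$, hence covers it, so there are at least $T_1-R_0$ distinct values $\lambda_t$, and your constants go through unchanged. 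The paper avoids all of this bookkeeping by its intermediate-value argument on heights along a path, which produces one lattice point of every height $h>h_0$; your version, once patched as above, buys independence from the path argument but not from connectedness, which the truth of the statement requires in some form.
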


\begin{proof}
This is essentially Lemma 9.1 of \cite{Orr}.

The first observation is that if $x_1$ and $x_2$ are two points of $\Lambda$
such that $X \cap (\cF - x_1)$ and $X \cap (\cF - x_2)$
are both non-empty, then $\Sigma \cap \Lambda$ contains at least one point of height $h$
for every $h$ between $H(x_1)$ and $H(x_2)$.

Note that $X$ is path-wise connected in the Euclidean topology. Let $C$ be a path from 
a point in $X \cap (\cF - x_2)$ to a point in $X \cap (\cF - x_2)$.

When $C$ crosses over from $\cF - u_1$ to to an adjacent domain $\cF-u_2$, the heights of $u_1$ and $u_2$ change by at most one.

It follows that for any $h$ between $H(x_1)$ and $H(x_2)$, there is a $u \in \Lambda$ of height $\leq h$
such that $X \cap (\cF - u)$ is not empty.
This $u$ belongs to $\Sigma \cap X$.

By assumption $X$ is unbounded. Thus  as $x$ varies in $\Lambda$ such that $X \cap {\cF - x}$ is non-empty, $h(x)$ goes to infinity.

It follows that there is an $h_0$ such that for any $h > h_0$, $\Sigma \cap \Lambda$ contains at least one point of height $h$.

Take $T_0 = 2h_0$.
\end{proof}

\begin{rem}
The referee has pointed out to us that Tsimerman, in \cite{T}, has made a similar observation. Namely, that in a similar setting an unbounded analytic set should intersect `a lot of fundamental domains'. 
\end{rem}

We now use the following theorem of Pila and Wilkie (\cite{PW}, Theorem 1.8).

For a definable subset $\Theta\subset \RR^n$, we define $\Theta^{alg}$ to be 
the union of all positive dimensional semi-algebraic subsets contained in $\Theta$. We define $\Theta^{tr}$ to be
$\Theta \backslash \Theta^{alg}$.

\begin{teo}[Pila-Wilkie]
Let $\Theta$ be a subset of $\RR^{n}$ definable in an o-minimal structure.
Let $\epsilon > 0$. There exists a constant $c = c(\Theta,\epsilon)$ such that
for any $T \geq 0$,
$$
|\{ x \in \Theta^{tr} \cap \QQ^n : H(x) \leq T \}| \geq c T^{\epsilon}.
$$
\end{teo}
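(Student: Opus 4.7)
The statement as printed asserts a lower bound $|\Theta^{tr}\cap\QQ^n\cap\{H\le T\}|\ge cT^\epsilon$, but this is not what \cite{PW}, Theorem 1.8, says and cannot be literally true: take $\Theta=\emptyset$, or any positive-dimensional semi-algebraic $\Theta$ (so $\Theta^{tr}=\emptyset$), and the left-hand side is zero for every $T$. The use of the theorem after Proposition \ref{counting} also requires the opposite direction: combined with the lower bound $|\Sigma\cap\Lambda\cap\{H\le T\}|\ge T/2$ just established, one needs an \emph{upper} bound on $|\Sigma^{tr}\cap\QQ^n\cap\{H\le T\}|$ to force $\Sigma^{alg}$ to be non-empty (and thereby extract the positive-dimensional semi-algebraic set $W$ referred to in the introduction). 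I therefore read the $\ge$ as a typo for $\le$ and sketch the plan for the corrected Pila--Wilkie upper bound.

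The proof proceeds by induction on $\dim\Theta$, in three steps. First, apply o-minimal cell decomposition to reduce to the case that $\Theta$ is a single definable $C^r$-cell of dimension $k\le n$; finiteness of the decomposition means this costs only a constant factor depending on $\Theta$ and $r$, not on $T$. Second, apply the Pila--Wilkie $C^r$-reparametrization theorem (a definable refinement of Yomdin--Gromov): the cell can be covered by the images of finitely many definable maps $\phi_j\colon (0,1)^k\to\RR^n$ whose partial derivatives of order $\le r$ are all bounded by $1$, the number of $\phi_j$ depending only on $\Theta$ and $r$.

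Third, and this is the arithmetic heart of the argument, apply the Bombieri--Pila determinant method to each $\phi_j$: rational points of height at most $T$ in the image of $\phi_j$ are contained in the zero set of $O(T^{\alpha(k,r)})$ real polynomials of degree bounded in terms of $k$ and $r$, where $\alpha(k,r)\to 0$ as $r\to\infty$. Given $\epsilon>0$, fix $r$ large enough that $\alpha(k,r)<\epsilon$. For each such polynomial $P$, dichotomise: either $\phi_j$ maps entirely into $\{P=0\}$, in which case $\phi_j((0,1)^k)$ lies in a positive-dimensional semi-algebraic subset of $\Theta$ and so contributes nothing to $\Theta^{tr}$; or $\phi_j^{-1}\{P=0\}$ is a proper definable subset of $(0,1)^k$ of dimension strictly less than $k$, to which the induction hypothesis applies and produces $O(T^\epsilon)$ rational points.

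The main obstacle is the determinant-method estimate of the third step: constructing, over each small sub-box of $(0,1)^k$ of diameter $\approx T^{-1/k}$, an auxiliary polynomial of controlled degree that vanishes simultaneously on all rational points of height $\le T$ in the image of the box, and then controlling the total number of sub-boxes and polynomials needed so the overall count grows no faster than $T^\epsilon$. The $C^r$ bounds from the reparametrization feed directly into the Taylor-expansion input of the determinant argument; everything else (cell decomposition, the dimension induction, the semi-algebraic/transcendental dichotomy) is formal o-minimal bookkeeping.
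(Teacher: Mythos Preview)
Your diagnosis of the typo is correct: the inequality should be $\le$, not $\ge$, and your argument for why (together with the way the theorem is used immediately afterwards) is exactly right.

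As for the proof itself: the paper does not prove this statement. It is quoted as \cite{PW}, Theorem~1.8, and used as a black box to conclude that $\Sigma^{alg}\cap\Lambda\neq\emptyset$. So there is no ``paper's own proof'' to compare against; you have supplied a proof sketch where the paper supplies only a citation. That said, your sketch is the standard architecture of the Pila--Wilkie argument (cell decomposition, $C^r$-reparametrisation \`a la Yomdin--Gromov, Bombieri--Pila determinant method on each parametrising patch, dimension induction via the semi-algebraic/transcendental dichotomy), and it is accurate. For the purposes of this paper, however, a citation suffices; your added value here is catching the sign error.
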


From Proposition \ref{counting} it now follows that $\Sigma^{alg}\cap \Lambda$ is not empty.

Let $W$ be a connected positive dimensional semi-algebraic subset 
contained in $\Sigma$.
For each $w$ in $W$, $\dim( (X + w) \cap \wt{V}) = \dim(X)$ 
and hence an analytic component of $(X + w) \cap \cF$ is contained in $\pi^{-1} V$. By analytic continuation, we see that $X + w \subset \pi^{-1}V$.
We have proved:

\begin{prop}\label{semialg}
With the notations and assumptions of this section, there exists a positive dimensional semialgebraic subset $W$ such that
$$
X + W \subset  \pi^{-1}V.
$$
\end{prop}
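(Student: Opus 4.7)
The plan is to combine the lower bound of Proposition \ref{counting} with the Pila--Wilkie counting theorem applied to the definable set $\Sigma$. Under our chosen basis $\lambda_1,\dots,\lambda_{2n}$ we identify $\Lambda$ with $\ZZ^{2n}\subset \QQ^{2n}$, and the height $H$ on $\Lambda$ coincides with the standard height on $\QQ^{2n}$. Proposition \ref{counting} then supplies at least $T/2$ rational points of height $\leq T$ in $\Sigma\cap \QQ^{2n}$ for $T\geq T_0$. If $\Sigma^{tr}=\Sigma$, Pila--Wilkie would cap this count by $cT^{\epsilon}$ for every $\epsilon>0$, which is incompatible with linear growth. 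Hence $\Sigma^{alg}$ is non-empty, and we fix a connected, positive-dimensional semialgebraic subset $W\subset \Sigma$.

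Next I would unpack the definition of $\Sigma$ at points $w\in W$. The condition $\dim((X+w)\cap \wt{V})=\dim(X)$ means that some analytic stratum of $(X+w)\cap \wt{V}$ has the same dimension as the real analytic manifold $X+w$, so it contains a non-empty Euclidean-open subset $U_w\subset X+w$. Since $\wt{V}=\cF\cap \pi^{-1}V$, we have $U_w\subset \pi^{-1}V$.

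The last step is to upgrade this local inclusion to the global statement $X+w\subset \pi^{-1}V$. The preimage $\pi^{-1}V$ is a complex analytic subset of $\CC^n$, cut out locally by holomorphic equations $f_1,\dots,f_r$. Their restrictions to the connected real analytic manifold $X+w$ are real analytic functions vanishing on the open set $U_w$, hence vanishing identically on $X+w$. This gives $X+w\subset \pi^{-1}V$ for every $w\in W$, and consequently $X+W\subset \pi^{-1}V$, as required.

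The step I expect to be the most delicate is the analytic continuation in the third paragraph: one has to use that $X$ (and therefore each translate $X+w$) is connected and real analytic, and that the defining functions of $\pi^{-1}V$ are holomorphic, so that their pullbacks to $X+w$ are genuine real analytic functions to which the identity principle applies. The Pila--Wilkie application itself is entirely routine once Proposition \ref{counting} is in place, since the lattice points are rational of bounded height in a uniform sense.
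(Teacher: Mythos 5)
Your proposal is correct and follows essentially the same route as the paper: the linear lower bound of Proposition \ref{counting} against the Pila--Wilkie bound on $\Sigma^{tr}$ yields a positive dimensional semialgebraic $W\subset\Sigma$, and then for each $w\in W$ the dimension condition gives an open piece of $X+w$ inside $\pi^{-1}V$, which analytic continuation (the identity principle applied to local holomorphic defining functions of $\pi^{-1}V$ along the connected manifold $X+w$) upgrades to $X+w\subset\pi^{-1}V$. Your third paragraph just makes explicit the step the paper summarizes as ``by analytic continuation.''
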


\subsection{Final argument.}

We use the following lemma whose proof can for example be found in \cite{Orr}, Lemma 8.1.
\begin{lem}
Let $\cZ$ be a connected complex analytic subset of $\CC^g$.
Let $\cX$ be a connected irreducible semialgebraic set contained in $\cZ$. 
Then there is a complex algebraic variety $\cY$ such that
$\cX \subset \cY \subset \cZ$.
\end{lem}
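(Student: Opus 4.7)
The plan is to take $\cY$ to be the complex Zariski closure of $\cX$ in $\CC^g$. Since $\cX$ is irreducible as a semialgebraic set, $\cY$ is an irreducible complex algebraic variety, and the inclusion $\cX \subset \cY$ is automatic; the content is to establish $\cY \subset \cZ$.

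The set $\cY \cap \cZ$ is a complex analytic subset of $\cY$ containing $\cX$, and because $\cX$ is connected it lies in a single irreducible analytic component $W$ of $\cY \cap \cZ$; it therefore suffices to prove $W = \cY$. Pick a point $p \in \cX \cap \cY^{\sm}$ that is also smooth in $\cX$ as a real analytic set (such $p$ exist, since both conditions cut out dense subsets of $\cX$, and $\cX$ is Zariski dense in $\cY$). The key step, which is where the semialgebraic hypothesis enters, is the tangent space identity
$$
\CC \cdot T_p \cX \; = \; T_p \cY,
$$
where $\CC \cdot T_p \cX \subset T_p \CC^g$ denotes the $\CC$-linear span of the real tangent space to $\cX$ at $p$. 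The inclusion $\supset$ is immediate: every $P \in I(\cX)$ has $dP|_p$ vanishing on $T_p \cX$, hence on $\CC \cdot T_p \cX$, so $T_p \cY \supset \CC \cdot T_p \cX$. The reverse inclusion uses semialgebraicity: near $p$ the set $\cX$ admits a Nash (real analytic, semialgebraic) parametrisation whose complexification yields a complex algebraic subvariety of $\CC^g$ of complex dimension $\dim_\CC \CC \cdot T_p \cX$ that contains $\cX$ locally, hence globally by irreducibility of $\cX$; by minimality of $\cY$ this subvariety equals $\cY$, giving $\dim_\CC \cY = \dim_\CC \CC \cdot T_p \cX$ and $T_p \cY = \CC \cdot T_p \cX$. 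Without the semialgebraic hypothesis, the reverse inclusion can be strict: for $\cX = \{(x, e^x) : x \in \RR\} \subset \CC^2$ the complex Zariski closure is $\CC^2$ while $\CC \cdot T_p \cX$ is a proper complex line, and indeed the conclusion of the lemma fails in such examples.

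Granting the identity, any complex analytic germ at $p$ that contains the germ of $\cX$ has $\CC$-linear tangent space containing $\CC \cdot T_p \cX = T_p \cY$ and sits inside the germ of $\cY$, so it must coincide with the germ of $\cY$. Applied to the germ of $W$, this yields $\cY \subset \cZ$ on a neighborhood of $p$ in $\cY^{\sm}$. To propagate globally, the set $\{q \in \cY^{\sm} : \text{germ}(\cY, q) \subset \cZ\}$ is open by definition and closed by the identity principle for holomorphic functions on the connected complex manifold $\cY^{\sm}$; containing $p$, it equals all of $\cY^{\sm}$, and closedness of $\cZ$ together with density of $\cY^{\sm}$ in $\cY$ forces $\cY \subset \cZ$. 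The main obstacle is the tangent space identity, which is precisely the point where semialgebraicity is essential.
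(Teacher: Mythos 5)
The paper itself gives no proof of this lemma (it is quoted from Orr and justified by the citation to \cite{Orr}, Lemma 8.1), so your attempt has to stand on its own. Your global strategy is sound: taking $\cY$ to be the complex Zariski closure of $\cX$ is forced (any algebraic $\cY$ with $\cX\subset\cY\subset\cZ$ contains that closure), and reducing to a tangent/germ comparison at a generic smooth point $p$ followed by spreading out with the identity principle on $\cY^{\sm}$ is in the spirit of the arguments in the literature. The problem is that your ``key step'' is asserted, not proved, and as literally stated it is inaccurate. Complexifying a Nash parametrisation $\phi\colon U\subset\RR^d\to\cX$ yields a complex \emph{analytic} set of dimension $m=\dim_\CC \CC\cdot T_p\cX$ near $p$; it does not ``yield a complex algebraic subvariety''. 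What Nash-ness gives you directly is that each coordinate of $\phi$ is algebraic over $\CC(u_1,\dots,u_d)$, and that only bounds the dimension of the Zariski closure of the complexified image by $d=\dim_\RR\cX$, not by $m$. The two bounds differ exactly when $\cX$ has complex tangent directions ($m<d$), e.g.\ when $\cX$ is a semialgebraic open piece of a complex submanifold -- which is precisely the situation the lemma is about, so the gap sits at the heart of the argument. It can be closed, but it needs a genuine extra argument: for instance, let $V_0\subset\CC^d\times\CC^g$ be the Zariski closure of the graph of the complexified parametrisation (of dimension $d$ by algebraicity of the coordinates), note that the rank of the complexified differential is at most $m$ on the whole complex domain because it is at most $m$ on the totally real set $U$ (identity principle, with $p$ chosen so that the rank is constant near $p$ on $\cX$), and then use that in characteristic zero the image of the projection $V_0\to\CC^g$ has dimension equal to the generic rank of its differential, which is at most $m$ since the graph is Zariski dense in $V_0$; alternatively one can run the classical reflection/specialisation argument in the conjugate variables. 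Without some such input, your ``tangent space identity'' is essentially a local restatement of the lemma.

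Two secondary points need repair as well. First, connectedness of $\cX$ does not place it inside a single irreducible component $W$ of $\cY\cap\cZ$ (a connected set can meet several components without lying in any one of them); fortunately you only need the germ of $\cY\cap\cZ$ at $p$, which contains the germ of $\cX$ automatically, so $W$ can be dispensed with or chosen as a component in which $\cX$ has nonempty interior, with $p$ taken there. Second, the step ``any complex analytic germ at $p$ containing the germ of $\cX$ has tangent space containing $T_p\cY$ and sits inside the germ of $\cY$, so it coincides with it'' is not valid via tangent spaces alone: if the germ is singular at $p$, its Zariski tangent space can be all of $T_p\cY$ while the germ is strictly smaller. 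Argue instead by dimension: the germ contains the local complexification of $\cX$, hence has dimension at least $m$, and being contained in the germ of $\cY$, which at the smooth point $p$ is irreducible of dimension $m$ (irreducibility of $\cY$ itself should be justified from irreducibility of the real Zariski closure of $\cX$, also to know $\cY^{\sm}$ is connected for the final continuation step), it must equal it. With these repairs and a proof of the central claim, your approach does give the lemma, by a route that is a reasonable alternative to simply citing Orr as the paper does.
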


By proposition \ref{semialg} and the above lemma, we see that for any $x \in X$, there exists
a positive dimensional complex algebraic subset $Y_x$ containing $X$ and contained in $\pi^{-1}(V)$.
By the abelian Ax-Lindemann-Weierstrass theorem \ref{AxLin}, the Zariski closure of 
$\pi(Y_x)$ is a union of weakly special subvarieties of $V$. Therefore, $V$ contains a subvariety of the form $P + B_P$ where $P=\pi(x)$ and 
$B_P$ is a positive dimensional abelian subvariety of $A$. This finishes the proof of theorem \ref{t1}.

\section{Cell decomposition and essential closure.}

In this section we consider an unbounded definable set $X \subset \CC^n$. We refer to section 8 of \cite{MillerVDD} for the definition of a real analytic cell. What is relevant to us is that a real analytic cell in $\RR^n$  is a definable real analytic submanifold, definable-analytically isomorphic to
$\RR^m$ for some $m \leq n$.
By Theorem 8.9 of \cite{MillerVDD}, there is a finite number of analytic cells $X_1, \dots , X_k$
such that $X$ is a disjoint union of the $X_k$.

\begin{prop}
The essential closure $Zaress(\pi(X))$ is the union of $Zar(\pi(X_i))$ where $X_i$s are the unbounded cells.

\end{prop}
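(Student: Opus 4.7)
The plan is to reduce the computation of $Zaress(\pi(X))$ to the individual pieces of the cell decomposition. Write $X = X_1 \sqcup \cdots \sqcup X_k$; since Zariski closure commutes with finite unions,
$$
Zar\bigl(\pi(X \setminus B(0,R))\bigr) \;=\; \bigcup_{i=1}^{k} Zar\bigl(\pi(X_i \setminus B(0,R))\bigr)
$$
for every $R > 0$. For $R$ larger than the Euclidean diameter of every bounded cell, the terms corresponding to bounded $X_i$ become empty. The proof therefore reduces to establishing, for each unbounded cell $X_i$ and all sufficiently large $R$, the equality
$$
Zar\bigl(\pi(X_i \setminus B(0,R))\bigr) \;=\; Zar\bigl(\pi(X_i)\bigr). \qquad (\star)
$$

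The equality $(\star)$ is the heart of the proof and is what I expect to be the main obstacle. First I would replace $X_i \setminus B(0,R)$ by the smaller, manifestly open subset $U_R := X_i \setminus \overline{B(0,R)}$ of $X_i$, which is nonempty precisely because $X_i$ is unbounded. Since $U_R \subseteq X_i \setminus B(0,R) \subseteq X_i$, it suffices to show $Zar(\pi(U_R)) = Zar(\pi(X_i))$. Setting $W := Zar(\pi(U_R))$, the desired inclusion $\pi(X_i) \subseteq W$ is equivalent to showing that the closed set $S := X_i \cap \pi^{-1}(W)$ coincides with $X_i$.

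The key analytic input is real analytic continuation. By construction $U_R \subseteq S$, so the interior $S^o$ of $S$ in $X_i$ is nonempty. To show $S^o$ is also closed in $X_i$, I would take $p \in \overline{S^o}$, choose complex analytic coordinates on a neighborhood of $\pi(p) \in A$ in which $W$ is cut out by finitely many holomorphic functions $g_1, \ldots, g_s$, and consider their pullbacks $g_j \circ \pi$ as complex-valued real analytic functions on a connected open neighborhood $N$ of $p$ in the real analytic manifold $X_i$. Since the $g_j \circ \pi$ vanish on the nonempty open subset $S^o \cap N$, real analytic continuation on $N$ forces them to vanish identically on $N$, whence $N \subseteq S^o$ and $p \in S^o$. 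As $X_i$ is definable-analytically isomorphic to $\RR^{m_i}$ it is connected, so the nonempty clopen set $S^o$ equals $X_i$, establishing $(\star)$.

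Putting everything together, for $R$ larger than a threshold $R_0$ depending only on the finitely many bounded cells,
$$
Zar\bigl(\pi(X \setminus B(0,R))\bigr) \;=\; \bigcup_{X_i \text{ unbounded}} Zar\bigl(\pi(X_i)\bigr),
$$
which is visibly independent of $R$. This simultaneously shows that $Zaress(\pi(X))$ is well-defined and identifies it with the union on the right, as claimed.
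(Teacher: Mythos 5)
Your proof is correct and follows essentially the same route as the paper: after the cell decomposition and discarding the bounded cells for large $R$, the key point is the paper's Lemma that for a (connected) real analytic manifold the Zariski closure of the image of a nonempty open subset equals that of the whole manifold, which the paper proves by the same analytic-continuation idea (via global sections of $\cO(l)$ in a projective embedding, where you use local holomorphic equations plus an open-closed argument). One cosmetic slip: you should take $R$ large enough that $B(0,R)$ \emph{contains} every bounded cell, not merely larger than each cell's diameter (a bounded cell far from the origin can have small diameter).
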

\begin{proof}
We start with a lemma.
\begin{lem}\label{continuation}
Let $Z$ be  a real analytic manifold in $\CC^n$ and $U \subset Z$ an open subset.

Then
$$
Zar(\pi(U)) = Zar(\pi(Z))
$$ 

In particular, if $Z$ is an analytic unbounded submanifold of $\CC^n$, then
$$
Zaress(\pi(Z)) = Zar(\pi(Z))
$$
\end{lem}
\begin{proof}
One inclusion is obvious. 

Write $Zar(\pi(U)) \subset \PP^m$ for some $m$ and let $s \in H^0(\PP^m, \cO(l))$ for 
$l\geq 1$ such that $s$ is zero on $\pi(U)$.
Then $s \circ \pi$ is zero on $U$ and by analytic continuation $s \circ \pi$ is zero on $Z$.
It follows that $s$ is zero on $\pi(Z)$, hence $Zar(\pi(Z)) \subset Zar(\pi(U))$.
\end{proof}

Let $X = X_1 \coprod \dots \coprod X_k$ be a cell
decomposition of $X$. 
For $R$ large enough, $X \cap B(0,R)$ contains the union of all the bounded cells in the above decomposition.

We have 
$$
Zaress(\pi(X)) = \bigcup_{\{i : X_i \text{unbounded} \}} Zaress(\pi(X_i)).
$$

By Lemma \ref{continuation}, for an unbounded cell  $X_i$,
$$
Zaress(\pi(X_i)) = Zar(\pi(X_i)).
$$

The result follows.





\end{proof}

\section{Characterisation of subvarieties containing a dense set of weakly special subvarieties.}\label{shuwu}
In this section we prove a proposition which we believe to be of 
independent interest.

Let $A$ be an abelian variety and $V$ a subvariety of $A$.
Define the stabiliser of $V$ as
$$
{\rm Stab}(V) = \{ P \in A : P + V = V\}.
$$

Recall that for an abelian subvariety $B$ of $A$, there exists an abelian subvariety $B'$ such that $A = B + B'$
and $B \cap B'$ is finite. We always refer to $B$ and $B'$ as above.

\begin{prop} \label{Zardensity}
Let $V$ be an irreducible subvariety of $A$.

\begin{enumerate}
\item
Assume $\dim {\rm Stab}(V) > 0$.

Then there exists abelian subvarieties $B$ and $B'$ of $A$ such that
$A = B + B'$ and $V = B + V'$ where $V'$ is a subvariety of $B'$. 
\item
Assume that ${\rm Stab}(V)$ is finite. Then the set of positive dimensional weakly special
subvarieties contained in $V$ is not Zariski dense.

\item
Assume again that ${\rm Stab}(V)$ is finite. 
Let $\Sigma$ be the set of all positive dimensional weakly special subvarieties contained in $V$.

For an abelian subvariety $B \subset A$, denote by $B'$ an abelian subvariety such that
$A = B + B'$.

There exists a finite set $B_1, \dots , B_r$ of abelian subvarieties of $A$ and $W_1, \dots, W_r$
of subvarieties of $B'_i$ such that
$$
Zar(\Sigma) = \bigcup_{i=1}^r B_i + W_i.
$$
\end{enumerate}
\end{prop}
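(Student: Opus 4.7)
The plan for part (1) is to apply Poincar\'e reducibility. Set $B := {\rm Stab}(V)^0$, the identity component of the stabiliser, which is a positive dimensional abelian subvariety by hypothesis, and choose an abelian subvariety $B' \subset A$ with $A = B + B'$ and $B \cap B'$ finite. Define $V' := V \cap B' \subset B'$. The inclusion $B + V' \subset V$ is immediate from the $B$-invariance of $V$ together with $V' \subset V$. Conversely, any $v \in V$ admits a decomposition $v = b + b'$ with $b \in B$ and $b' \in B'$; then $B$-invariance of $V$ gives $b' = v - b \in V$, so $b' \in V \cap B' = V'$, and hence $v \in B + V'$.

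For parts (2) and (3), I propose to prove (3) first and deduce (2) directly. Write $Z := Zar(\Sigma) \subset V$, viewing $\Sigma$ also as the subset of $V$ obtained by taking the union of its elements. By Noetherianity of the Zariski topology, $Z$ has finitely many irreducible components $Z_1, \ldots, Z_r$. The key claim is that each $Z_j$ has positive dimensional stabiliser; applying (1) to $Z_j$ then produces $Z_j = B_j + W_j$ with $W_j \subset B_j'$, which is precisely the decomposition asserted in (3). Part (2) follows at once: each $B_j + W_j$ is a \emph{proper} subvariety of $V$, because equality would force $B_j \subset {\rm Stab}(V)$, contradicting finiteness; and an irreducible variety cannot be a finite union of proper closed subvarieties.

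To show $\dim {\rm Stab}(Z_j) > 0$: by the definition of the Zariski closure, $Z_j$ is the closure of a countable union $\bigcup_\alpha (P_\alpha + B_\alpha)$ of positive dimensional weakly special subvarieties contained in $Z_j$. Group them by their underlying abelian subvariety: for each positive dimensional abelian $B \subset A$, let $Y_B^{(j)}$ be the Zariski closure in $Z_j$ of the union of those $P_\alpha + B$ with $B_\alpha = B$. Each $Y_B^{(j)}$ is $B$-invariant, since translation by $b \in B$ is a Zariski automorphism of $A$ fixing each $P_\alpha + B$ set-theoretically, hence fixing the union and its Zariski closure. If $Y_B^{(j)} = Z_j$ for some $B$, then $Z_j$ is $B$-invariant and we are done.

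The main obstacle is the remaining case, in which $Y_B^{(j)} \subsetneq Z_j$ is proper for every positive dimensional $B$ while $\bigcup_B Y_B^{(j)}$ is nevertheless Zariski dense in $Z_j$. A priori such a configuration is consistent (a countable union of proper closed subvarieties of an irreducible variety can be Zariski dense, as illustrated by a family of lines through the origin in $\mathbb{A}^2$), so ruling it out requires additional structure. I would exploit: that the abelian subvarieties of $A$ form a countable discrete poset; that each $Y_B^{(j)}$ is itself $B$-invariant and, by (1) applied to its components, decomposes as $B + (Y_B^{(j)} \cap B')$; and a Noetherian-type finiteness argument on the chains of closed $B$-invariant subvarieties of $V$, which should compress the countable family $\{Y_B^{(j)}\}_B$ to a finite one and ultimately force some $Y_B^{(j)}$ to coincide with $Z_j$. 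This compression step is the delicate heart of the argument.
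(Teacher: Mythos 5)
Your part (1) is correct and is essentially the paper's argument (the paper defines $V'$ via the quotient $A \to A/B$, you take $V' = V \cap B'$; both work), and your reduction of (2) to (3) is logically sound. The genuine gap is the ``key claim'' that every irreducible component $Z_j$ of $Zar(\Sigma)$ has positive dimensional stabiliser: you assert it, observe that the grouping by underlying abelian subvariety only produces countably many proper $B$-invariant closed subsets $Y_B^{(j)}$ whose union is dense, and then appeal to an unspecified ``Noetherian-type compression''. No such formal argument exists: Noetherianity gives no control over countable unions (your own example of lines through the origin shows a countable union of proper closed subvarieties can be Zariski dense), the abelian subvarieties of $A$ do form a countable family, and nothing you wrote rules out the configuration in which every $Y_B^{(j)}$ is proper. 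The claim you need is a genuine theorem (it is essentially Kawamata's structure theorem on the union of translated abelian subvarieties contained in a subvariety of an abelian variety) and is exactly the content of part (2), which you had planned to deduce afterwards; so as written the heart of the proposition is missing.

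The paper supplies precisely this missing input. For (2) it first reduces to ${\rm Stab}(V)=\{0\}$ by passing to $A/{\rm Stab}(V)$, and then invokes Zhang's Lemma 3.1 \cite{Zhang}: for some $m>1$ the difference map $\phi_m \colon V^m \to A^{m-1}$, $(x_1,\dots,x_m)\mapsto (x_1-x_2,\dots,x_m-x_{m-1})$, is a generic embedding. If $P+B\subset V$ with $\dim B>0$, then $\phi_m\bigl((P+B)^m\bigr)=B^{m-1}$, so $\phi_m$ has positive dimensional fibres along $(P+B)^m$; a Zariski dense family of such subvarieties would therefore contradict generic injectivity, proving (2). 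Part (3) is then obtained in the order opposite to yours: each component $W$ of $Zar(\Sigma)$ contains a Zariski dense set of weakly special subvarieties, so by the contrapositive of (2) ${\rm Stab}(W)$ is positive dimensional, and (1) gives $W = B + W'$; finiteness of the set of components concludes. Your architecture (prove the stabiliser claim for components, then get (3), then (2)) could in principle be kept, but only if the ``compression step'' is replaced by an actual input of this kind -- Zhang's lemma as in the paper, or Kawamata's theorem -- rather than a finiteness argument on invariant subvarieties, which cannot succeed on its own.
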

\begin{proof}
Assume $\dim {\rm Stab}(V) > 0$ and let $B$ be the neutral component of ${\rm Stab}(V)$.

Let $B'$ be an abelian subvariety such that $A = B + B'$ and let $\psi \colon A \lto A/B$ be the quotient.
Let $V'$ be $\psi|_{B'}^{-1}(\psi(V))$. Then 
$$
V = \{ B + x : x \in V \} = \{B + x : x \in V'\} = B + V'.
$$
This proves (1).


We will now prove (2).
Assume that ${\rm Stab}(V)$ is finite.
We start by reducing to the case where ${\rm Stab}(V) = \{ 0 \}$.
Let $A' = A/{\rm Stab}(V)$ and let $\phi \colon A \lto A'$ be the 
quotient map and let $V' = \phi(V)$. Note that $\phi^{-1}(V') = V + {\rm Stab}(V) = V$.
We claim that ${\rm Stab}(V') = \{ 0 \}$. 
Let $P \in {\rm Stab}(V')$ and $Q \in \phi^{-1}(P)$.
We have 
$$
\phi(Q + V) = P + V' = V'
$$
It follows that $Q + V \subset \phi^{-1}(V') = V$ and for dimension reasons $Q + V = V$.
Hence $Q \in {\rm Stab}(V)$ and $P = \phi(Q) = 0$.

As the conclusion of (2) holds for $V$ if and only if it holds for $V'$, we may therefore assume that ${\rm Stab}(V) = \{ 0 \}$.

For $m > 1$, consider the map
$$
\phi_m \colon V^m \lto A^{m-1}
$$
defined by
$$
\phi_m(x_1, \dots, x_m) = (x_1 - x_2, \dots , x_m - x_{m-1}).
$$
By \cite{Zhang}, Lemma 3.1, there exists $m > 1$ such that the map $\phi_m$ is
a generic embedding.

Let $P + B$ be a positive dimensional weakly special subvariety contained in $V$. Then $\phi_m((P+B)^m) = B^{m-1}$. 
The map $\phi_m$ is therefore not injective on $(P+B)^m$. 
Therefore $V$ can not contain a Zariski dense set of positive dimensional subvarieties of the form $P+B$.
This proves (2).

Let us now prove (3).
Let $\Sigma$ as in the statement, the set of all positive dimensional weakly special subvarieties contained in $V$
and let $W$ be a component of $Zar(\Sigma)$.
Then $W$ contains a Zariski dense set of weakly special subvarieties and by (2), ${\rm Stab}(W)$ is
positive dimensional. It follows from (1) that $W = B + W'$ where $B$ is an abelian subvariety of $A$ and $W'$ a subvariety of $B'$. Since $Zar(\Sigma)$ has finitely many components, the conclusion of (3) follows. 
\end{proof}

\begin{rem}
The geometric aspect of 
Lang's conjecture predicts that given a variety of general type $V$, the union of subvarieties,
not of general type, is not Zariski dense.
It is a known fact that a subvariety $V$ of an abelian variety is of general type if and only if
${\rm Stab}(V)$ is finite.
Therefore, our proposition \ref{Zardensity} implies the geometric Lang's conjecture for subvarieties of abelian varieties.
\end{rem}

\begin{rem}
This proposition is an abelian analogue of the result of the first author (see \cite{Ul}) in the hyperbolic case which is proved by 
completely different methods.
\end{rem}

\section{Proof theorems \ref{main_thm1} and \ref{main_thm}.}

In this section we deduce theorems \ref{main_thm1} and \ref{main_thm} from the preceding results.

Let $A$ and $X$ be as in the assumptions of Theorem \ref{main_thm1}.
Let $V$  be a component of the essential Zariski closure of $\pi(X)$.

In section 3 we have seen that $Zaress(\pi(X))$ is a finite union of Zariski closures of sets of the form 
$\pi(Y)$ where $Y$ is an unbounded definable real analytic submanifold of $\CC^n$.
Therefore, the conclusion of theorem \ref{main_thm1} follows from theorem \ref{t1}.

Let now $X$ be as in \ref{main_thm}. By theorem \ref{main_thm1}, $V=Zaress(X)$ contains a Zariski dense set of positive dimensional weakly special subvarieties. From proposition \ref{Zardensity}, we deduce that $V$ is of the form $V = B + V'$ where $B$ is a positive dimensional abelian subvariety 
 of $A$ and $V'$ is a subvariety of $B'$.
 Reiterating the argument with $B'$ and $V'$, we conclude that components of $V$ are weakly special.


\begin{thebibliography}{99}


\bibitem{Ax1} J. Ax, {\it On Schanuel's conjecture}, Annals of
  Math. {\bf 93} (1971), 1-24.
  
  \bibitem{Ax2} J. Ax, {\it Some topics in differential algebraic geometry I: Analytic subgroups of algebraic groups,}
  Amer. J. Math. {\bf 94} (1972), 1195-1204. 


\bibitem{Kobayashi} S. Kobayashi, {\it Hyperbolic Complex Spaces.}
Vol. 318. A Series of Comprehensive studies in Mathematics, Springer 1998.

\bibitem{MillerVDD} C. Miller, L. Van Den Dries, {\it On the real exponential field with restricted analytic functions.} Israel Journal of Math, Vol 85, 1994, 19-56.


\bibitem{Orr} M. Orr, {\it Introduction to abelian varieties and the 
Ax-Lindemann-Weierstrass theorem.} In ``O-Minimality and Diophantine Geometry''. LMS Lecture Notes Series, 421, 2015.

\bibitem{PW} J. Pila, A. Wilkie, {\it The rational points of a definable set.} Duke Math. Journal, 133(3), 591-616, 2006.

\bibitem{PZ} J. Pila, U. Zannier, {\it Rational points in periodic analytic sets and the Manin-Mumford conjecture, }
Rend. Math. Acc. Lincei {\bf 19} (2008), 149--162.

\bibitem{RU} N. Ratazzi, E. Ullmo, {\it Galois+Equidistribution=Manin-Mumford.}  Summer School Arithmetic geometry, 419-430, Clay Math. Proc., 8, Amer. Math. Soc., Providence, RI, 2009.

\bibitem{Ul} E. Ullmo, {\it Applications du theor\`eme d'Ax-Lindemann hyperbolique.}
Compositio Mathematica, Vol 150, Issue 02,175- 190, 2014.

\bibitem{UY} E. Ullmo, A. Yafaev, {\it Algebraic flows on abelian varieties.} Crelle's Journal. To appear.

\bibitem{T} J. Tsimerman, {\it Ax-Schanuel and o-minimality.} In ``O-Minimality and Diophantine Geometry''. LMS Lecture Notes Series, 421, 2015.

\bibitem{VDD} L. Van Den Dries, {\it Tame topology and o-minimal structures.}
LMS Lecture Notes Series, {\bf 248}, 1998.

\bibitem{Zhang} S-W. Zhang, {\it Equidistribution of small points an abelian varieties.} Annals of Maths, {\bf 147}, 159-165, 1998.

\end{thebibliography}
\end{document}